\newtheorem{proposition}{Proposition}
\newcommand{\ten}[1]{\boldsymbol{\mathcal{#1}}}
\begin{document}

\title{
Efficient hyperparameter estimation in Bayesian inverse problems using sample average approximation}

\author[1]{
Julianne Chung}
\author[2]{Scot M. Miller}
\author[3]{Malena Sabat\'e  Landman}
\author[4]{Arvind K. Saibaba}

\affil[1]{Department of Mathematics, Emory University, Atlanta, GA USA}
\affil[2]{Department of Environmental Health and Engineering, Johns Hopkins University, Baltimore, MD, USA}
\affil[3]{Mathematical Institute, University of Oxford, UK}
\affil[4]{Department of Mathematics, North Carolina State University, Raleigh, NC, USA}

\maketitle

\begin{abstract}
In Bayesian inverse problems, it is common to consider several hyperparameters that define the prior and the noise model that must be estimated from the data. In particular, we are interested in linear inverse problems with additive Gaussian noise and Gaussian priors defined using Mat\'{e}rn covariance models. In this case, we estimate the hyperparameters using the maximum a posteriori (MAP) estimate of the marginalized posterior distribution. However, this is a computationally intensive task since it involves computing log determinants. To address this challenge, we consider a stochastic average approximation (SAA) of the objective function and use the preconditioned Lanczos method to compute efficient approximations of the function and gradient evaluations. We propose a new preconditioner that can be updated cheaply for new values of the hyperparameters and an approach to compute approximations of the gradient evaluations, by reutilizing information from the function evaluations. We demonstrate the performance of our approach on static and dynamic seismic tomography problems.
\end{abstract}

{\textbf{Keywords:} preconditioning, Monte Carlo, inverse problems}

\section{Introduction}

Inverse problems arise in many important applications, where the aim is to estimate some unknown inverse parameters from given observations or data. In particular, we consider the unknown parameters to be in the form of detailed spatial or spatial temporal maps of physical quantities, e.g., slowness of the medium in seismic tomography. The inverse problems that we tackle in this paper are ill-posed; which means that a solution to the problem may not exist, may not be unique, or may not depend continuously on the data. To address the ill-posedness, we adopt the Bayesian approach to find  approximate solutions to the inverse problems.

In the Bayesian approach, we treat the unknown parameters as a random variable and impose a prior distribution on it. The forward operator, which maps the parameters to the observations, is assumed to be fixed; and the data is assumed to be corrupted by additive Gaussian noise. These ingredients define the likelihood. The Bayesian approach uses the Bayes rule to combine the prior and the likelihood to give the posterior distribution, which characterizes the uncertainty in the unknown parameters given measurement data. In this approach, there are several hyperparameters to be considered that govern the prior and the noise model, that are typically unknown in practice and must be estimated from the data. We follow the hierarchical Bayes approach and treat the hyperparameters as random variables as well, so we impose hyperpriors on them. Thus, now the posterior distribution depends jointly on the unknown parameters and the hyperparameters conditioned on the observed data.

The joint posterior distribution is non-Gaussian in general, and a full exploration of this distribution is computationally challenging for several reasons: the number of unknown parameters is large and evaluating the likelihood can be very expensive in practice. A typical approach to explore the posterior distribution, such as Markov Chain Monte Carlo (MCMC), can be infeasible in such situations.

In this paper, we consider the case where the forward problem is linear and the posterior distribution on the unknown parameters is Gaussian. The setting is similar to our recent work~\cite{hall2023efficient}, where we considered the marginal posterior distribution, obtained from the joint posterior distribution by marginalizing the unknown parameters, and compute the mode or maximum a posteriori (MAP) estimate of the hyperparameters. Since this is still computationally expensive for large-scale problems, the approach in~\cite{hall2023efficient} was to use a low-rank approximation of the forward operator using iterative methods. However, the low-rank approximation is not always suitable. Therefore, we propose a different approach based on stochastic optimization.    

\paragraph{Contributions} We propose a sample average approximation (SAA) type method for computing the maximum a posteriori (MAP) estimate of the marginalized posterior distribution. The novel contributions of this paper are as follows.
\begin{enumerate}
    \item The method to estimate the objective function combines a Monte Carlo estimator for the log-determinant of the matrix with a preconditioned Lanczos approach to apply the matrix logarithm.  We analyze the impact of the number of Monte Carlo samples and Lanczos iterations on the accuracy of the log-determinant estimator. 
    \item We use a novel preconditioner to accelerate the Lanczos iterations. The preconditioner is based on a parametric low-rank approximation of the prior covariance matrix, that is easy to update for new values of the hyperparameters. In particular, no access to the forward/adjoint solver is needed to update the preconditioner, and only a modest amount of precomputation is needed as a setup cost (independent of the optimization). 
    \item We also use a new trace estimator to approximate the gradient that has two features: first, it works with a symmetric form of the argument inside the trace, and second, it is able to reuse Lanczos iterates from the objective function computations. Therefore, the gradient can be computed essentially for free (i.e., requiring no additional forward/adjoint applications). 
    
\end{enumerate}
We demonstrate the performance of our approach on model problems from static and dynamic seismic tomography. 
\paragraph{Related work} 
The problem of optimizing for hyperparameters is closely related to parameter estimation in Gaussian processes via maximum likelihood (We may think of it as setting the forward operator as the identity matrix.). The literature on this topic is vast, but we mention a few key references that are relevant to our approach. In~\cite{anitescu2012matrix}, the authors propose a 
matrix-free approach to estimate the hyperparameters. The authors also use a sample average approximation for either the optimization problem or its optimality conditions. In~\cite{anitescu2017inversion}, the authors propose a reformulation of the problem that avoids computing the inversion of the (prior) covariance matrix. Approaches based on hierarchical matrices are considered in~\cite{geoga2020scalable,minden2017fast,ambikasaran2015fast}. Preconditioned Lanczos methods for estimating the log-determinant and its gradient are considered in~\cite{dong2017scalable,gardner2018gpytorch}. The methods we develop in this paper have some similarity to existing literature and share certain techniques in common. However, the main difference is that the Gaussian process methods do not involve forward operators. This raises two issues: first, we have to account for the problem structure which is different from Gaussian processes, and second, we have to account for the computational cost of the forward operator (and its adjoint), which may be comparable or greater than the cost of the covariance matrices. 

On the inverse problem side, there have been relatively few works for computing the hyperparameters by optimization. Several works (e.g.,~\cite{bardsley2018computational}) instead use sampling methods (e.g., MCMC), but these methods are extremely expensive since they require several thousand evaluations of the likelihood to achieve accurate uncertainty estimates.  Lastly, in~\cite{hall2023efficient}, we develop efficient methods for hyperparameter estimation based on low-rank approximations using the generalized Golub-Kahan iterative method. A brief review of other techniques is also given in the same paper. 

\paragraph{Outline} In Section \ref{sec:background} we provide an overview of the hierarchical Bayesian inverse problem and describe the optimization problem arising from the hyperparameter estimation.  Computational methods for hyperparameter estimation are described in Section \ref{sec:saa}, and numerical results are provided in Section \ref{sec:numerics}.  Conclusions and discussion are provided in Section \ref{sec:conclusions}.

\section{Hierarchical Bayesian Inverse Problems}
\label{sec:background}
In this section, we review background material on hierarchical Bayesian inverse problems. The inverse problem involves recovering the parameters $\bfs \in \bbR^n$ from measurements $\bfd$, which have been corrupted by additive Gaussian measurement noise, $\bfeta \in \bbR^{m}$, and takes the form
\begin{equation}
    \bfd = \bfA \bfs + \bfeta,  \qquad \bfeta \sim \calN((\bfzero, \bfR(\bftheta))
\end{equation}
where $\bfA \in \bbR^{m \times n}$ represents the forward map and $\bftheta  \in \bbR_+^{K}$, represents the (nonnegative) hyperparameters. In the hierarchical Bayes approach, we treat $\bftheta$ as a random variable, which we endow with prior density $\pi_{\rm hyp}(\bftheta)$. We assume that the noise covariance matrix  $\bfR : \bbR_+^{K} \, \rightarrow \bbR^{m \times m},$ where $\bfR(\cdot)$ is symmetric and positive definite (SPD), and has an inverse and square root that is computationally easy to obtain for any input (e.g., a diagonal matrix). 

\paragraph{Prior distribution} We assume that the prior distribution for the parameters $\bfs$ is also Gaussian of the form $\calN (( \bfmu(\bftheta), \bfQ(\bftheta)),$ where $\bfmu: \bbR_+^{K} \, \rightarrow \bbR^{n}$ and $\bfQ : \bbR_+^{K} \, \rightarrow \bbR^{n \times n},$ where $\bfQ(\cdot)$ is assumed to be SPD. Given a set of points $\{\bfx_j\}_{j=1}^n$ at which the unknowns $\bfs$ are represented, the covariance matrix $\bfQ(\bftheta)$ is defined through means of a covariance kernel $\kappa(\cdot, \cdot; \bftheta): \bbR^d \times\bbR^d \rightarrow  \bbR_+$. More precisely, the entries of the matrix $\bfQ$ take the form $[\bfQ (\bftheta)]_{ij} = \kappa(\bfx_i,\bfx_j;\bftheta)$ for $1\le i,j \le n. $ The class of covariance kernels we focus on in this paper is the Mat\'ern covariance class, for which we give details in Section~\ref{sec:numerics}. The resulting covariance matrices are dense, so storing and computing with them is challenging. We assume throughout this paper that matrix vector products (matvecs) with $\bfQ(\bftheta)$ can be performed efficiently; in particular, matvecs can be done in $\mc{O}(n)$ or $\mc{O}(n\log n)$ floating point operations (flops). Details are given in~\cite{ambikasaran2013fast}. For the hyperpriors, following~\cite{bardsley2018computational}, we take the prior distribution on $\bftheta$ to be a  Gamma prior with density
\begin{equation}\label{eq:hyperprior}
\pi_{\rm hyp}(\bftheta) \propto \exp\left( - \sum_{j=1}^K\gamma \theta_i \right) \qquad \theta_i > 0, \quad 1 \leq i \leq K. 
\end{equation}
The parameter $\gamma$ is set to be $10^{-4}$ and chosen such that the probability density function is relatively flat over the parameter space.

\paragraph{Posterior distribution} 

Using Bayes' theorem, the posterior density, $\pi (\bfs, \bftheta \, | \, \bfd),$ is characterized by
\[ \pi (\bfs, \bftheta \, | \, \bfd) = \frac{\pi (\bfd \, | \, \bfs, \bftheta) \pi (\bfs \, | \, \bftheta) \pi_{\rm hyp} (\bftheta)}{\pi(\bfd)}, \]
where $\pi(\cdot)$ denotes an arbitrary probability density of its argument. Using the above assumptions, the posterior density can be expressed explicitly as
\begin{equation} \pi(\bfs, \bftheta \, | \, \bfd) \propto \frac{\pi_{\rm hyp}(\bftheta) \exp \left( - \frac{1}{2} \| \bfd - \bfA \bfs \|^{2}_{\bfR^{-1}(\bftheta)} - \frac{1}{2} \| \bfs - \bfmu(\bftheta) \|^{2}_{\bfQ^{-1}(\bftheta)} \right)}{\det(\bfR(\bftheta))^{1/2} \det(\bfQ(\bftheta))^{1/2}}, 
\end{equation}
where $\| \bfx \|^{2}_{\bfK} = \bfx^{\top} \bfK \bfx$ for any SPD matrix $\bfK.$ The marginal posterior density is obtained as $\pi(\bftheta|\bfd) = \int_{\bbR^n } \pi(\bfs, \bftheta \, | \, \bfd) d\bfs$ and takes the form 
\begin{equation}
\label{eq:marginal}
\pi(\bftheta \, | \, \bfd) \propto \pi_{\rm hyp}(\bftheta) \det(\bfPsi(\bftheta))^{-1/2} \exp \left( - \frac{1}{2} \| \bfA \bfmu(\bftheta) - \bfd \|^{2}_{\bfPsi^{-1}(\bftheta)} \right), 
\end{equation}
where $\bfPsi : \bbR_+^K \rightarrow \R^{m \times m}$ takes the form
\begin{equation}
\label{eq:Zmatrix}
\bfPsi(\bftheta) = \bfA\bfQ(\bftheta)\bfA\t + \bfR(\bftheta).
\end{equation}

\paragraph{Hyperparameter estimation} One approach is to draw samples (e.g., using MCMC)  from \eqref{eq:marginal}, and using the samples to quantify the uncertainty in the hyperparameters.  However, this may be prohibitive for large-scale problems because evaluating the density function (or its logarithm) requires evaluating the determinant of and multiple solves with the matrix $\bfPsi$ that depends on $\bftheta$, which can be expensive. To compound matters, hundreds or thousands of samples are required to get accurate statistics, which can involve several hundred thousand density function  evaluations. 

Instead, the approach we follow in this paper is based on the empirical Bayes approach. In this approach, we compute the MAP estimate, which corresponds to finding the point estimate that maximizes the marginal posterior distribution. Equivalently, this estimate minimizes the negative log of the marginal posterior. That is, the problem of hyperparameter estimation becomes solving an optimization problem:
\begin{equation}
\label{eq:fullopt}
    \min_{\bftheta \in \R^K_+} \mc{F}(\bftheta) \equiv -\log\pi_{\rm hyp}(\bftheta) + \frac12 \logdet(\bfPsi(\bftheta)) + \frac12 \|\bfA \bfmu(\bftheta) - \bfd\|_{\bfPsi(\bftheta)^{-1}}^2.
\end{equation}
Notice that for each set of parameters $\bftheta$, the objective function $\mc{F}$ can be expensive to compute.  In particular, the main computational cost corresponds to evaluating the log determinant of $\bfPsi(\bftheta)$.  

For reference, we provide an analytical expression for the gradient $\nabla \mathcal{F} = \left(\frac{\partial \mathcal{F} }{\partial \theta_i}\right)_{1 \leq i \leq K},$ 

\begin{align}
    (\nabla \mathcal{F}  (\bftheta))_{i} = &  - \frac{1}{\pi_{\rm hyp}(\bftheta)} \frac{\partial \pi_{\rm hyp}(\bftheta)}{\partial \theta_{i}} + \frac{1}{2} \trace\left( \bfPsi(\bftheta)^{-1} \frac{\partial \bfPsi(\bftheta)}{\partial \theta_i} \right) \nonumber \\
    & \hspace{.4cm} - \frac{1}{2} \left[ \bfPsi(\bftheta)^{-1}(\bfA \bfmu(\bftheta) - \bfd) \right]^{\top} \left[ \frac{\partial \bfPsi(\bftheta)}{\partial \theta_i} \bfPsi^{-1}(\bftheta)(\bfA \bfmu(\bftheta) - \bfd) - 2 \bfA \frac{\partial \bfmu(\bftheta)}{\partial \theta_i} \right]. \label{eq:gradient} 
\end{align}
In the following, we describe an approach based on the sample average approximation (SAA) method for solving the optimization problem~\eqref{eq:fullopt}.

\section{Methods for estimating hyperparameters}
\label{sec:saa}
In this section, we reinterpret the optimization problem~\eqref{eq:fullopt} as a stochastic optimization problem and derive an SAA method to compute solutions efficiently. 
\subsection{SAA approach} We first interpret the objective function \eqref{eq:fullopt} as the expected value of a random variable, and then we use Monte Carlo methods to approximate the objective function. 
To this end, since $\bfPsi (\bftheta)$ is an SPD matrix for any $\bftheta$, we can write 
\begin{equation}
\label{eq:logdet}
    \logdet(\bfPsi(\bftheta)) = \trace(\log(\bfPsi(\bftheta))).
\end{equation}
Next, for any random vector $\bfw$ that is isotropic (i.e., it satisfies  $\expect[\bfw] =\bfzero$ and $\expect[\bfw\bfw\t] =\bfI$), we can write $\trace(\bfM) = \expect_{\bfw}[\bfw\t\bfM\bfw]$. Therefore, now,  
$ \logdet(\bfPsi(\bftheta)) = \expect_\bfw[\bfw\t\log (\bfPsi(\bftheta))\bfw ],$ where $\log(\bfA)$ denotes the matrix logarithm of an SPD matrix $\bfA$. The optimization problem \eqref{eq:fullopt} can be expressed as the stochastic optimization problem
\begin{equation}
\label{eqn:stochopt}
    \min_{\bftheta \in \R^K_+} -\log\pi_{\rm hyp}(\bftheta) + \frac12 \expect_\bfw[\bfw\t\log(\bfPsi(\bftheta))\bfw ] + \frac12 \|\bfA\bfmu - \bfd\|_{\bfPsi(\bftheta)^{-1}}^2 .
\end{equation}

There are two main classes of methods~\cite{shapiro2021lectures} for solving stochastic optimization problems like \eqref{eqn:stochopt}. Stochastic approximation methods are iterative methods, where random samples (e.g., one or a batch) are used to update the solution at each iteration.  SAA methods represent another family of stochastic optimization methods, which use Monte Carlo simulations.  That is, assume we have a random sample of $n_{\rm mc}$ independent realizations of $\bfw$: $\bfw_1,\dots,\bfw_{n_{\rm mc}}$. In practice, for $\bfw$ we either use Rademacher random vectors (entries independently drawn from $\{-1,+1\}$ with equal probability) or standard Gaussian random vectors (entries independently drawn from $\mc{N}(0,1)$). Then, the Monte Carlo approach replaces the expected objective function in \eqref{eqn:stochopt} with a sample average approximation of the form
\begin{equation}\label{eq:MCobjfun}
    \widehat{\mc{F}}_{\rm mc} (\bftheta) \equiv - \log\pi_{\rm hyp} (\bftheta) + \frac{1}{2n_{\rm mc}}\sum_{t=1}^{n_{\rm mc}} \bfw_t\t\log(\bfPsi(\bftheta))\bfw_t   + \frac12 \|\bfA\bfmu(\bftheta) - \bfd\|_{\bfPsi(\bftheta)^{-1}}^2. 
\end{equation}
The statistical properties of the SAA methods have been established in~\cite[Chapter 5]{shapiro2021lectures}.

However, evaluating $\widehat{\mc{F}}_{\rm mc} (\bftheta)$ can be still very expensive since it requires computing $\log(\bfPsi(\bftheta))$, and the number of required samples $n_{\rm mc}$ can be large. Moreover, in order to use efficient optimization methods, we need to either compute the gradient $\nabla \widehat{\mc{F}}_{\rm mc}(\bftheta)$  or a suitable approximation of the original gradient $\widehat{\nabla \mc{F}}_{\rm mc}(\bftheta)$.
To mitigate the first issue, several approaches are available based on either a Chebyshev or Lanczos polynomial approximation to the matrix logarithm \cite{han2017approximating,ubaru2017fast}. However, the degree of the required polynomial may still be too large to obtain accurate approximations. In this paper, to address both issues, we use a preconditioned Lanczos approach to approximate the Monte Carlo estimator for the function \eqref{eq:MCobjfun}, and we follow an analogous Monte Carlo approach for the gradient of the original function, which will be summarized later.   

For the remainder of this section, we drop the explicit dependence on $\bftheta$ when possible to simplify the exposition.

\subsection{Preconditioned log-determinant estimator}

Consider the computation of the log determinant of an SPD matrix $\bfPsi \in \bbR^{m \times m}$. Assume we have a preconditioner $\bfG$ that is (a) easy to invert, (b) satisfies $\bfG\t \bfG \approx \bfPsi^{-1}$, and (c) its determinant can be readily computed, then we can write
 \[ \logdet(\bfPsi) = \logdet(\bfG\bfPsi\bfG\t) -  2\log |\textsf{det}(\bfG)|. \]
In Section~\ref{sec:saa}\ref{ssec:precond}, we show how to efficiently compute such a preconditioner. 
 The approach then is to apply the Monte Carlo estimator to the preconditioned matrix $\bfG\bfPsi\bfG\t$ rather than the matrix $\bfPsi$. 
Therefore, the preconditioned Monte Carlo estimator to the objective function becomes
\begin{equation}
\label{eq:MCobjfun2}
\widehat{\mc{F}}_{\rm prec} = - \log\pi_{\rm hyp}  + \frac{1}{2n_{\rm mc}}\sum_{t=1}^{n_{\rm mc}} \bfw_t\t\log(\bfG\bfPsi\bfG\t)\bfw_t - \log|\det(\bfG)|  + \frac12 \|\bfA\bfmu - \bfd\|_{\bfPsi^{-1}}^2. 
\end{equation}
The resulting estimator is unbiased since 
\[ \expect\left[\frac{1}{n_{\rm mc}}\sum_{t=1}^{n_{\rm mc}} \bfw_t\t\log(\bfG\bfPsi\bfG\t)\bfw_t\right] - 2\log|\det(\bfG)| = \logdet(\bfG\bfPsi\bfG\t) -  2\log|\det(\bfG)| =  \logdet(\bfPsi).\]

Next, we show how to efficiently compute the quadratic form  $\bfw\t\log(\bfG \bfPsi \bfG \t)\bfw$, for a given nonzero vector $\bfw$. Given $\bfPsi$ and a preconditioner $\bfG$, we first compute the Lanczos recurrence with the starting vector $\bfw/\|\bfw\|_2$.  After $k$ steps of the symmetric Lanczos process, we have the matrix $\bfV_{k}= [\bfv_1,\dots,\bfv_{k}]\in \bbR^{m \times k}$ that contains orthonormal columns and tridiagonal matrix
\begin{equation}\label{eqn:tk} \bfT_k = \bmat{\gamma_1 & \delta_2  \\ \delta_2 & \gamma_2 & \delta_2 \\  &  \ddots & \ddots & \ddots \\ & &  \delta_{k-1}& \gamma_{k-1} & \delta_k \\ & & & \delta_k& \gamma_k} \in \bbR^{k \times k}  \end{equation}
that satisfy, in exact arithmetic, the following relation,
\begin{equation}\label{Lancrec}
    \bfG\bfPsi\bfG\t \bfV_k = \bfV_k\bfT_k + \delta_{k+1}\bfv_{k+1}\bfe_k\t,
\end{equation}
where $\bfe_j$ is the $j$th column of the identity matrix of appropriate size.
The preconditioned Lanczos process is summarized in Algorithm \ref{alg:lanczos}. In practice, the vectors $\bfV_k$ tend to lose orthogonality in floating point arithmetic~\cite{chow2014preconditioned} and, therefore, we use full reorthogonalization of the vectors to maintain orthogonality.  

\begin{algorithm}[!ht]
    \begin{algorithmic}[1]
     \REQUIRE Matrix $\bfPsi$, Preconditioner $\bfG$, vector $\bfw$, number of iterations $k$
        \STATE $\delta_1 = \|\bfw\|_2, \bfv_1 = \bfw/\delta_1$
\FOR {$i=1, \dots, k$}
\STATE $\gamma_i = \bfv_i\t \bfG \bfPsi \bfG\t \bfv_i $,
\STATE $\bfr = \bfG \bfPsi \bfG\t \bfv_i- \gamma_i \bfv_i- \delta_{i-1} \bfv_{i-1}$
\STATE $\bfv_{i+1} = \bfr/\delta_i$, where $\delta_i = \| \bfr\|_2$
\ENDFOR
\RETURN Matrices $\bfV_k$ and $\bfT_k$
    \end{algorithmic}
    \caption{Preconditioned Lanczos: $[\bfV_k,\bfT_k] = $Lanczos$(\bfPsi,\bfG,\bfw,k)$}
    \label{alg:lanczos}
\end{algorithm}

Next, given the Lanczos recurrence \eqref{Lancrec}, the quadratic term involving the logarithm in \eqref{eq:MCobjfun2} can be approximated as 
\begin{equation}\label{eqn:lanczoslog} \bfw\t\log(\bfG \bfPsi \bfG\t)\bfw \approx \|\bfw\|_2^2\bfe_1\t \log(\bfT_k)\bfe_1.\end{equation}
Here, the matrix logarithm can be computed using an eigendecomposition for a cost of $\mc{O}(k^3)$ flops. Alternatively, the quadratic form $\bfe_1\t \log(\bfT_k)\bfe_1,$ can be expressed in terms of a quadrature formula. Details of this can be found in~\cite{ubaru2017fast}. An algorithm to compute the objective function $\widehat{\mc{F}}_{\rm prec}$ is provided in Algorithm~\ref{alg:MCobjgrad}. 

\paragraph{Error analysis} With an appropriate choice of  preconditioner, we expect much fewer than $m$ Lanczos iterations to be required to evaluate the function accurately. 
The following result quantifies the number of samples and the number of Lanczos basis vectors required to ensure that a desired absolute error holds with high probability. To simplify notation, we denote the preconditioned matrix $\bfPsi_\bfG = \bfG\bfPsi\bfG\t$. Furthermore, let $\omega_2(\bfPsi_\bfG) = \|\bfPsi_\bfG\|_2 \|\bfPsi_\bfG^{-1} \|_2$ denote the 2-norm condition number of $\bfPsi_G$ and let $\bfL = \log(\bfPsi_\bfG) - \diag(\log(\bfPsi_\bfG))$.
\begin{proposition}
Let $\epsilon,\delta \in (0,1)$ denote two fixed user-defined parameters representing the absolute error and failure probability respectively.  Furthermore, let the number of samples $n_{\rm mc}$ and the number of Lanczos iterations, $k$, be chosen to satisfy the inequalities 
\begin{eqnarray}
n_{\rm mc}   \geq &\>  32\epsilon^{-2}\left( \|\bfL\|_2^2 + (\epsilon/2) \|\bfL\|_2 \right)\log \frac{2}{\delta},\\
k  \geq & \> \frac{\sqrt{\omega_2(\bfPsi_\bfG)+ 1}}{4}\log \left( 4\epsilon^{-1} m (\sqrt{\omega_2(\bfPsi_\bfG)}+ 1) \log (2 \omega_2(\bfPsi_\bfG))\right).
\end{eqnarray}
Then, with probability at least $1-\delta$
\[ \left|  \frac{1}{n_{\rm mc}}\sum_{t=1}^{n_{\rm mc}} \bfw_t\t\log(\bfPsi_{\bfG})\bfw_t  - 2 \log |\det(\bfG)| - \logdet(\bfPsi) \right| \le \epsilon,   \]
where $\{\bfw_t\}_{t=1}^{n_{\rm mc}}$ are independent Rademacher random vectors. 
\end{proposition}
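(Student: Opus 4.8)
The plan is to bound the total error by splitting it with the triangle inequality into a stochastic (Monte Carlo) contribution and a deterministic Lanczos--quadrature contribution, forcing each to be at most $\epsilon/2$, and using the identity $\trace(\log(\bfPsi_\bfG)) - 2\log|\det(\bfG)| = \logdet(\bfPsi)$ recorded above. (As displayed, the quantity being bounded contains the exact forms $\bfw_t\t\log(\bfPsi_\bfG)\bfw_t$; the hypothesis on $k$ is what makes the estimator computable, since in practice each such term is replaced by the Lanczos surrogate $\|\bfw_t\|_2^2\,\bfe_1\t\log(\bfT_k)\bfe_1$ of \eqref{eqn:lanczoslog}, and the argument below in fact bounds that computable estimator.) Writing $\bfT_k^{(t)}$ for the tridiagonal matrix produced by Algorithm~\ref{alg:lanczos} on the $t$-th sample, the split is
\begin{align*}
\Bigl| \tfrac1{n_{\rm mc}}\!\sum_{t=1}^{n_{\rm mc}} \|\bfw_t\|_2^2\,\bfe_1\t\log(\bfT_k^{(t)})\bfe_1 - 2\log|\det(\bfG)| - \logdet(\bfPsi)\Bigr|
&\le \tfrac1{n_{\rm mc}}\!\sum_{t=1}^{n_{\rm mc}}\! \bigl| \|\bfw_t\|_2^2\,\bfe_1\t\log(\bfT_k^{(t)})\bfe_1 - \bfw_t\t\log(\bfPsi_\bfG)\bfw_t\bigr| \\
&\quad + \Bigl|\tfrac1{n_{\rm mc}}\!\sum_{t=1}^{n_{\rm mc}}\! \bfw_t\t\log(\bfPsi_\bfG)\bfw_t - \trace(\log(\bfPsi_\bfG))\Bigr|;
\end{align*}
call the two terms on the right $E_{\rm Lan}$ and $E_{\rm mc}$. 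A union bound then gives the proposition once $\prob[E_{\rm mc} > \epsilon/2]\le\delta$ and $E_{\rm Lan}\le\epsilon/2$ holds deterministically.

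\textbf{Monte Carlo term.} Since each $\bfw_t$ is Rademacher, $\bfw_t\t\diag(\log(\bfPsi_\bfG))\bfw_t = \trace(\log(\bfPsi_\bfG))$ with probability one, so $\bfw_t\t\log(\bfPsi_\bfG)\bfw_t - \trace(\log(\bfPsi_\bfG)) = \bfw_t\t\bfL\bfw_t$ — a centered quadratic form in a Rademacher vector (a Rademacher chaos of order two), which is why the matrix $\bfL$ with its diagonal removed is the object that enters the bound. Thus $E_{\rm mc}$ is an average of $n_{\rm mc}$ i.i.d.\ centered sub-exponential terms, and I would apply a Hanson--Wright / Bernstein-type concentration inequality for such chaoses, whose tail has a sub-Gaussian regime and a sub-exponential regime; setting the resulting exponent equal to $\log(2/\delta)$ and solving for $n_{\rm mc}$ yields a sufficient condition of exactly the stated shape, $n_{\rm mc}\ge 32\,\epsilon^{-2}\bigl(\|\bfL\|_2^2 + \tfrac\epsilon2\|\bfL\|_2\bigr)\log(2/\delta)$, the two summands being precisely the two regimes of the tail.

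\textbf{Lanczos term.} For each sample $\|\bfw_t\|_2^2 = m$, and $\bigl| \|\bfw_t\|_2^2\,\bfe_1\t\log(\bfT_k^{(t)})\bfe_1 - \bfw_t\t\log(\bfPsi_\bfG)\bfw_t\bigr| = m\bigl|\bfe_1\t\log(\bfT_k^{(t)})\bfe_1 - \bfv\t\log(\bfPsi_\bfG)\bfv\bigr|$ with $\bfv = \bfw_t/\|\bfw_t\|_2$ the Lanczos start vector. The crucial structural fact (the Gauss-quadrature interpretation of Lanczos, see \cite{ubaru2017fast}) is that $\bfv\t p(\bfPsi_\bfG)\bfv = \bfe_1\t p(\bfT_k^{(t)})\bfe_1$ for every polynomial $p$ of degree $\le 2k-1$, while $\bfT_k^{(t)} = \bfV_k\t\bfPsi_\bfG\bfV_k$ has its spectrum inside $[\lambda_{\min}(\bfPsi_\bfG),\lambda_{\max}(\bfPsi_\bfG)]$ by Cauchy interlacing. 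Hence, for any such $p$, the per-sample error is at most $2\max_{x\in[\lambda_{\min}(\bfPsi_\bfG),\lambda_{\max}(\bfPsi_\bfG)]}|\log x - p(x)|$; minimizing over $p$ of degree $\le 2k-1$ and invoking the classical estimate for the best uniform polynomial approximation of $\log$ on a positive interval of condition number $\omega_2(\bfPsi_\bfG)$ — which decays geometrically at rate $\asymp \exp(-\mathrm{deg}/\sqrt{\omega_2(\bfPsi_\bfG)})$ — bounds it by roughly $C\,\omega_2(\bfPsi_\bfG)\exp(-ck/\sqrt{\omega_2(\bfPsi_\bfG)})$. Requiring $m$ times this to be $\le\epsilon/2$ and solving for $k$ produces a sufficient condition of exactly the stated form, $k\gtrsim \sqrt{\omega_2(\bfPsi_\bfG)+1}\,\log\bigl(\epsilon^{-1}m\,\sqrt{\omega_2(\bfPsi_\bfG)}\,\log(2\omega_2(\bfPsi_\bfG))\bigr)$, with the constant $\tfrac14$ and the nested logarithm coming from the sharp form of that approximation bound. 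Combining the two bounds via the union bound closes the proof.

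\textbf{Main obstacle.} The splitting, and the algebra of inverting the exponential inequalities for $n_{\rm mc}$ and $k$, are routine; the real work is pinning down the two input estimates so that the constants match the statement. First, selecting (or re-deriving) the concentration inequality for Rademacher chaoses that produces the constant $32$ and the stated norm dependence — a point worth double-checking, since Hanson--Wright in its usual form has $\|\bfL\|_F^2$ in the sub-Gaussian term, so one must either read $\|\bfL\|_2$ there as the Frobenius (vectorized $\ell_2$) norm or absorb the difference into a slightly looser sufficient condition. Second, quoting the correct sharp polynomial-approximation bound for $\log$ on $[\lambda_{\min}(\bfPsi_\bfG),\lambda_{\max}(\bfPsi_\bfG)]$: this is what yields the $\sqrt{\omega_2(\bfPsi_\bfG)+1}/4$ prefactor and the nested logarithm, and it is also where the factor $m$ enters (from $\|\bfw_t\|_2^2 = m$ together with the fact that all $n_{\rm mc}$ samples obey the same deterministic worst-case per-sample bound, so no further union cost over the samples is incurred in $E_{\rm Lan}$).
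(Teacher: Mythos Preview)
Your proposal is correct and, in fact, constitutes a self-contained proof of the result; the paper, by contrast, disposes of the proposition in one line by invoking \cite[Theorem~20]{cortinovis2022randomized}. What you have written is essentially an unpacking of that theorem: the same triangle-inequality split into a Hutchinson/Monte Carlo error (controlled by a Hanson--Wright/Bernstein tail for the Rademacher chaos $\bfw\t\bfL\bfw$, which is where $\bfL$ and the constant $32$ come from) and a deterministic Lanczos--quadrature error (controlled by best polynomial approximation of $\log$ on $[\lambda_{\min}(\bfPsi_\bfG),\lambda_{\max}(\bfPsi_\bfG)]$, which is where $\sqrt{\omega_2(\bfPsi_\bfG)}$ and the factor $m=\|\bfw_t\|_2^2$ enter). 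Your parenthetical remark that the displayed inequality ought to carry the Lanczos surrogate $\|\bfw_t\|_2^2\,\bfe_1\t\log(\bfT_k)\bfe_1$ rather than the exact form $\bfw_t\t\log(\bfPsi_\bfG)\bfw_t$ is well taken---otherwise the hypothesis on $k$ is idle---and is consistent with how the cited theorem is stated. Your caveat about $\|\bfL\|_2$ versus $\|\bfL\|_F$ in the sub-Gaussian term of Hanson--Wright is also a fair point to flag; in \cite{cortinovis2022randomized} the bound is stated with a Frobenius-type quantity, so either the paper is using $\|\cdot\|_2$ loosely or is invoking the (weaker) inequality $\|\bfL\|_F\le\sqrt{m}\,\|\bfL\|_2$, but since the paper simply cites, this is not resolved there either.
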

\begin{proof}
Follows readily from~\cite[Theorem 20]{cortinovis2022randomized}.
\end{proof}

This result gives conditions on the number of samples and the number of Lanczos basis vectors needed such that the absolute error in $\logdet(\bfPsi)$ is smaller than $\epsilon$ with high probability $1-\delta$. The bound also clearly highlights the role of the conditioning of the preconditioned operator. If the condition number,  $\omega_2(\bfPsi_\bfG)$, is small, then a smaller number of Lanczos iterations $k$ is necessary for the same accuracy with high probability. Next, if $\bfG\t\bfG \approx \bfPsi^{-1}$, then $\bfPsi_\bfG \approx \bfI$ and $\bfL \approx \bfzero$, so fewer samples are required for the same accuracy. This motivates the use of a good preconditioner for ensuring efficient and accurate approximations of the objective function.

\subsection{Choice of Preconditioner} \label{ssec:precond}
To explain the computational benefits of our preconditioner, it is helpful to briefly reintroduce the dependence of $\bfPsi$ on $\bftheta$. We assume that the prior covariance has a parametric low-rank approximation of the form
\begin{equation}
\bfQ(\bftheta) \approx \bfU \bfM(\bftheta)\bfU\t,
\end{equation}
where $\bfU \in \bbR^{n \times r}$ is a fixed set of basis vectors for representing the prior covariance matrix and $\bfM(\bftheta) \in \bbR^{r \times r}$ is a symmetric and positive semidefinite matrix. We construct such a parametric low-rank approximation using the Chebyshev polynomial approximation~\cite{fong2009black,khan2024parametric}. The details of this derivation are given in Appendix \ref{app:paramlowrank}. As an alternative to the Chebyshev approximation, one could also use the approach in~\cite{shustin2022gauss} based on Gauss-Legendre features. In both cases, we can construct an approximation to the matrix $\bfPsi(\bftheta)$ as 
\[\bfPsi(\bftheta)  \approx \widehat{\bfPsi}(\bftheta) \equiv (\bfA\bfU) \bfM (\bftheta)(\bfA\bfU)\t +\bfR(\bftheta).
\]
The important point is that $\bfA \bfU$ can be constructed in advance. More precisely, computing the preconditioner for new values of $\bftheta$ after initialization, does not require any matvecs with the forward operator (or its adjoint). 

To explain how to construct the preconditioner $\bfG$, we once again drop the dependence on $\bftheta$. We form the matrix $\bfK = 
 \bfR^{-1/2}\bfA\bfU\bfM^{1/2}$ and compute its thin SVD $\bfK = \bfW\bfSigma\bfZ\t$. Then, using the Woodbury identity, we can factorize 
\[ \widehat{\bfPsi}^{-1} = \bfR^{-1/2}(\bfW\bfSigma^2\bfW\t + \bfI )^{-1} \bfR^{-1/2} = \bfG\t\bfG,   \]
where $\bfG\t = \bfR^{-1/2} (\bfI-\bfW\bfD\bfW\t)$ with $\bfD = \bfI + (\bfI+\bfSigma^2)^{-1/2}$. 
Furthermore, we can readily compute the log determinant of the preconditioner as
\begin{align*}
    \logdet(\bfG) & = \logdet(\bfR^{-1/2}) + \logdet(\bfI - \bfD) \\
    & = - \frac12\logdet(\bfR) + \logdet(\bfI - \bfD).
\end{align*}
Therefore, assuming $\bfA\bfU$ has been precomputed, we can compute the matrix $\bfG$ (given in terms of $\bfW$ and $\bfD$) in $\mc{O}(mr^2 + r^3)$ flops, see Algorithm \ref{alg:precond}. Computing a matvec $\bfG\bfx$ or $\bfG\t\bfx$ then only requires $\mc{O}(mr)$ flops and the log-determinant $\logdet(\bfG)$ an additional $\mc{O}(m)$ flops. 

\begin{algorithm}[!ht]
\begin{algorithmic}[1]
    \REQUIRE Matrix $\bfA$, instance of hyperparameter $\bftheta$
    \STATE \COMMENT{Offline stage}
    \STATE Compute matrix $\bfU$ and the information needed to compute $\bfM$ (see Appendix~\ref{app:paramlowrank})
    \STATE Compute $\bfA\bfU$ \COMMENT{$r$ forward solves}
    \STATE \COMMENT{Online stage: given an instance $\overline{\bftheta}$}
    \STATE Compute $\bfM = \bfM(\overline{\bftheta})$ and $\bfR = \bfR(\overline\bftheta)$
    \STATE Compute $\bfK =\bfR^{-1/2} (\bfA\bfU)\bfM^{1/2} $ and its thin SVD $\bfW\bfSigma\bfZ\t$
    \STATE Compute diagonal matrix $\bfD = \bfI + (\bfI + \bfSigma^2)^{-1/2}$
    \RETURN Matrices $\bfW$ and $\bfD$
\end{algorithmic}
\caption{Compute preconditioner: $[\bfW,\bfD]=$Precond($\bfA,\bftheta$)}
\label{alg:precond}
\end{algorithm}

\subsection{Gradient computations} 
To compute the derivative of $\widehat{\mc{F}}_{\rm prec}$, we have to differentiate the matrix logarithm. One possible way of tackling this is to use the approach in~\cite{konig2023efficient}. However, we adopt a different approach in this paper that allows us to estimate the gradients $\{\widehat{\nabla \mc{F}}_i\}_{i=1}^K$ with only a little additional computational cost. Note that in this approach, we are not computing exact derivatives of $\widehat{\mc{F}}_{\rm prec}$ or of $\mc{F}$, but we are constructing Monte Carlo approximations of the gradient of $\mc{F}$. 

Consider the expressions for the derivative in~\eqref{eq:gradient}. Recall that during the computation of the objective function, we already have computed the solution $\bfz$ to $\bfPsi\bfz = \bfA\bfmu-\bfd$. Therefore, we can evaluate the expression (for $1 \le i \le K$) 
\[ \left[ \bfPsi^{-1}(\bfA \bfmu - \bfd) \right]^{\top} \left[ \frac{\partial \bfPsi}{\partial \theta_i} \bfPsi^{-1}(\bfA \bfmu - \bfd) - 2 \bfA \frac{\partial \bfmu}{\partial \theta_i} \right] = \bfz\t \frac{\partial \bfPsi}{\partial \theta_i} \bfz - 2 (\bfA\t\bfz)\t\frac{\partial \bfmu}{\partial \theta_i}, \]
efficiently by taking advantage of the right-hand side in the above displayed equation. Therefore, we focus our attention on approximating the term $\trace(\bfPsi^{-1}\frac{\partial \bfPsi}{\partial \theta_i} )$ in~\eqref{eq:gradient}. Applying a na\"ive Monte Carlo estimate $\frac{1}{n_{\rm mc}}\sum_{t=1}^{n_{\rm mc}} \bfw_t\t \bfPsi^{-1}\frac{\partial \bfPsi}{\partial \theta_i}\bfw_t$ requires us to solve  linear systems of the form $\bfPsi\bfz_t = \bfw_t$, which incurs an additional expense. Instead, we use the symmetric factorization
\[ \bfPsi^{-1} = \bfG^{-\top} (\bfG\bfPsi\bfG\t)^{-1/2}(\bfG\bfPsi\bfG\t)^{-1/2} \bfG^{-1}, \]
and the cyclic property of the trace estimator to use the Monte Carlo trace estimator 
\[ \begin{aligned} \trace(\bfPsi^{-1}\frac{\partial \bfPsi}{\partial \theta_i} ) = & \> \trace\left((\bfG\bfPsi\bfG\t)^{-1/2} \bfG^{-1}\frac{\partial \bfPsi}{\partial \theta_i}\bfG^{-\top} (\bfG\bfPsi\bfG\t)^{-1/2}\right)  \\
\approx & \>  \frac{1}{n_{\rm mc}}\sum_{t=1}^{n_{\rm mc}} \bfzeta_t\t\frac{\partial \bfPsi}{\partial \theta_i} \bfzeta_t, \qquad 1 \le i \le K, 
\end{aligned} 
\] 
where $\bfzeta_t \equiv\bfG^{-\top} (\bfG\bfPsi\bfG\t)^{-1/2} \bfw_t$, for $1 \le t \le n_{\rm mc}$. Recall that, to approximate the evaluation of the objective function, we had already computed an approximation to $\bfw_t\t\log(\bfG\bfPsi\bfG\t)\bfw_t$ in~\eqref{eqn:lanczoslog} using Lanczos to construct $\bfV_k$ and $\bfT_k$. If we consider reusing the same basis vectors, we can approximate $\bfzeta_t$ cheaply using the formula
\begin{equation}\label{eqn:lanczossqrt}
    \bfzeta_t \approx \|\bfw_t\|_2 \bfG^{-\top} (\bfV_k\bfT_k^{-1/2}\bfe_1).
\end{equation}
Using this characterization, an approximation to the gradient can be computed by reusing information from the objective function. The details of the computation of the gradient are given in Algorithm~\ref{alg:MCobjgrad}. 

\subsection{Discussion on computational cost/accuracy}
A summary of the computations involved in the evaluation of the objective function and the gradient for a given $\bftheta$ is present in Algorithm~\ref{alg:MCobjgrad}.

\begin{algorithm}[!ht]
\begin{algorithmic}[1]
    \REQUIRE Matrix $\bfA$, vector $\bfd$, instance of hyperparameter $\bftheta$
    \STATE Compute matrices (or appropriate function handles) $\bfR(\bftheta)$, $\bfQ(\bftheta)$, $\bfPsi(\bftheta)$, and vector $\bfmu$ 
    \STATE Compute the derivatives (or appropriate function handles) $\{ \frac{\partial \bfPsi}{\partial \theta_i}\}_{i=1}^K$ and $\{\frac{\bfmu}{\partial \theta_i}\}_{i=1}^K$
    \STATE Compute $[\bfW,\bfD]=$Precond($\bfA,\bftheta$); matrices $\bfW,\bfD$ define the matrix $\bfG$  
    \STATE $\text{ld}_{\bfG} = -\frac12 \logdet(\bfR) + \logdet(\bfI -\bfD)$
    \STATE \COMMENT{Stage 1: Objective function computation}
    \STATE Initialize $\text{ld} \leftarrow 0$
    \FOR {$t=1,\dots,n_{\rm mc}$}
    \STATE Run Lanczos (using Algorithm~\ref{alg:lanczos}): $[\bfV_k,\bfT_k] = \text{Lanczos}(\bfPsi, \bfG, \bfw_t, k)$
    \STATE $\text{ld} \leftarrow \text{ld} + \frac{1}{n_{\rm mc}}\|\bfw_t\|_2^2 \bfe_1\t\log(\bfT_k)\bfe_1$
    \STATE Compute $\bfzeta_t = \|\bfw_t\|_2\bfG^{-\top}\bfV_k \bfT_k^{-1/2}\bfe_1 $
    \ENDFOR 
    \STATE Compute $\bfr = \bfA\bfmu -\bfd$ and solve $\bfPsi\bfz = \bfr $ using preconditioned CG with preconditioner $\bfG$
    \STATE $\widehat{\mc{F}}_{\rm prec} = -\log \pi_{\rm hyp}(\bftheta) + \frac{1}{2}(\text{ld} - 2\text{ld}_{\bfG}) + \frac12\bfz\t\bfr$
    \STATE \COMMENT{Stage 2: Gradient evaluation}
    \FOR{$i=1,\dots,K$}
    \STATE $ (\widehat{\nabla \mc{F}})_i = - \frac{1}{\pi_{\rm hyp}(\bftheta)} \frac{\partial \pi_{\rm hyp}(\bftheta)}{\partial \theta_{i}} + \frac{1}{2n_{\rm mc}} \sum_{j=1}^{n_{\rm mc}} \bfzeta_t\t  \frac{\partial \bfPsi}{\partial \theta_i} \bfzeta_t -   \frac12[\bfz\t \frac{\partial \bfPsi}{\partial \theta_i} \bfz - (\bfA\t\bfz)\t\frac{\partial \bfmu}{\partial \theta_i}]$
    \ENDFOR
    \RETURN Approximation to objective function $\widehat{\mc{F}}_{\rm prec}$ and approximation to gradient $\{(\widehat{\nabla \mc{F}})_i\}_{i=1}^K$
\end{algorithmic}
\caption{Monte Carlo estimator for the objective function and the gradient}
\label{alg:MCobjgrad}
\end{algorithm}

To determine the computational cost, we have to make certain assumptions. We assume that the cost of computing matvecs with $\bfA$ and its transpose is $T_{A}$ flops. Similarly, the cost of computing matvecs with $\bfQ$ and its derivatives is $T_Q$ flops. Since $\bfR$ is assumed to be diagonal, the cost of a matvec with it or its inverse is $\mc{O}(m)$ flops. With these assumptions, a matvec with $\bfPsi$ is $T_\Psi = 2T_A + T_Q + \mc{O}(m)$ flops. A discussion of the computational costs follows:

\begin{enumerate}
    \item \textbf{Precomputation}:
With these assumptions, the cost of the precomputation involved in constructing the preconditioner is $\mc{O}(nr^2) + T_Ar$ flops.
\item \textbf{Objective function}: We can compute the preconditioner $\bfG$ in $\mc{O}(mr^2 + r^3)$ flops. The basis $\bfV_k$ can be computed in $kT_\Psi + \mc{O}(mk^2)$ flops, when Lanczos is used with reorthogonalization. In total, $\text{ld}$ can be computed in $  n_{\rm mc}( kT_\Psi   +  \mc{O}(mk^2+k^3))$ flops. There is an additional cost of computing $\bfz$ and $\bfz\t\bfr$, which is $kT_\Psi + \mc{O}(mk)$ flops.

\item \textbf{Gradient computation}: Re-using information from the objective function approximation, we can evaluate the gradient using  $T_\Psi (n_{\rm mc}+1) K$ additional flops.

\end{enumerate}

\section{Numerical Experiments}
\label{sec:numerics}
In this section, we investigate the performance of the Monte Carlo estimator for hyperparameter estimation. We consider two examples from seismic inversion, including a static problem and a dynamic problem.
To report computational time, we use MATLAB R20234b on a MacBook Pro with Apple M1 chip, with 16GB memory, and use 8 workers. For the optimization, we use the \verb|fmincon| solver in MATLAB which uses the interior point method and nonnegativity constraints. 

Note that the total number of Lanczos iterations per function (and gradient) evaluations is reported. This is subject to different stopping criteria. 
The Lanczos iterations for the evaluation of the matrix logarithms in \eqref{eqn:lanczoslog} are stopped when the absolute value of the relative difference between successive iterations, i.e.,
$$ |\bfe_1\t \log(\bfT_k)\bfe_1 - \bfe_1\t \log(\bfT_{k-1})\bfe_1| \slash |\bfe_1\t \log(\bfT_k)\bfe_1|$$
falls below a threshold: in particular, this threshold is set to be $10^{-7}$. 
 We solve linear systems with $\bfPsi$ using the Lanczos method, where the iterations are stopped when the corresponding relative residual norm falls below a threshold: in particular, this threshold is set to be  $10^{-8}$. 

\subsection{Static seismic inversion}
We consider a model problem from seismic inversion, where the goal is to image the slowness of the subsurface using seismic waves. We construct an instance of the problem using the \verb|PRseismic| function in IR Tools~\cite{gazzola2019ir}. The number of unknown pixels is $n = 256^2 = 65, 536$. The number of measurements $m$ ranges from $1,440$ to $12,960$ and is obtained from $m = (32*j)(45*j)$, where $32*j$ is the number of source rays and $45*j$ is the number of receivers and $j\in \{1,2,3\}$. To simulate measurement noise, we add $2\%$ Gaussian white noise. We take $\bfR(\bftheta) = \sigma_m^2 \bfI$, $\bfmu(\bftheta) = \bfzero$, and the covariance matrix $\bfQ(\bftheta)$ based on the Mat\'ern covariance kernel, 
\[ {\rm matern}(\bfx,\bfy) = \sigma_n^2 \frac{2^{1-\nu}}{\Gamma(\nu)}\left( \sqrt{2\nu}\frac{\|\bfx-\bfy\|_2}{\ell} \right)^\nu K_\nu\left( \sqrt{2\nu}\frac{\|\bfx-\bfy\|_2}{\ell}\right),\]
where $K_\nu$ is the modified Bessel function of the second kind, $\sigma_n^2$ controls the variance of the process and $\ell$ is the length scale. We take $\nu \in \{\frac12,\frac32,\frac52\}$ and the hyperparameters vector to be $\bftheta=(\sigma_m^2, \sigma_n, \ell)$, so that $K=3$.

\paragraph{Accuracy of objective function}
We first investigate the accuracy of the estimator to the objective function with an increasing number of Monte Carlo vectors $n_{\rm mc}$. The number of measurements is taken to be $m = 1440$, the parameter $\nu = 1/2$, and the preconditioner rank is $r = 20^2$. The initial value of $\bftheta$ is taken to be $ \bftheta_0 = (10^{-3}, 0.8147, 0.9058)$ where the last two coordinates are generated at random. For this example, the true value of $\sigma_m^2$ is $1.0784\times 10^{-4}$. We report the relative error in the objective function $|\mc{F}-\widehat{\mc{F}}_{\rm prec}|/|\mc{F}|$, which we average over $10$ independent runs per sample size. In Figure~\ref{fig:mcaccuracy} (left panel), we plot the relative error versus the number of Monte Carlo vectors, $n_{\rm mc}$, which varies from $8$ to $200$ in increments of $16$. The solid line denotes the average over $10$ runs, and the error bars represent $1$ standard deviation. As can be seen, the mean of the error decreases slowly with increasing $n_{\rm mc}$.
\begin{figure}[!ht]
    \centering
    \includegraphics[scale=0.32]{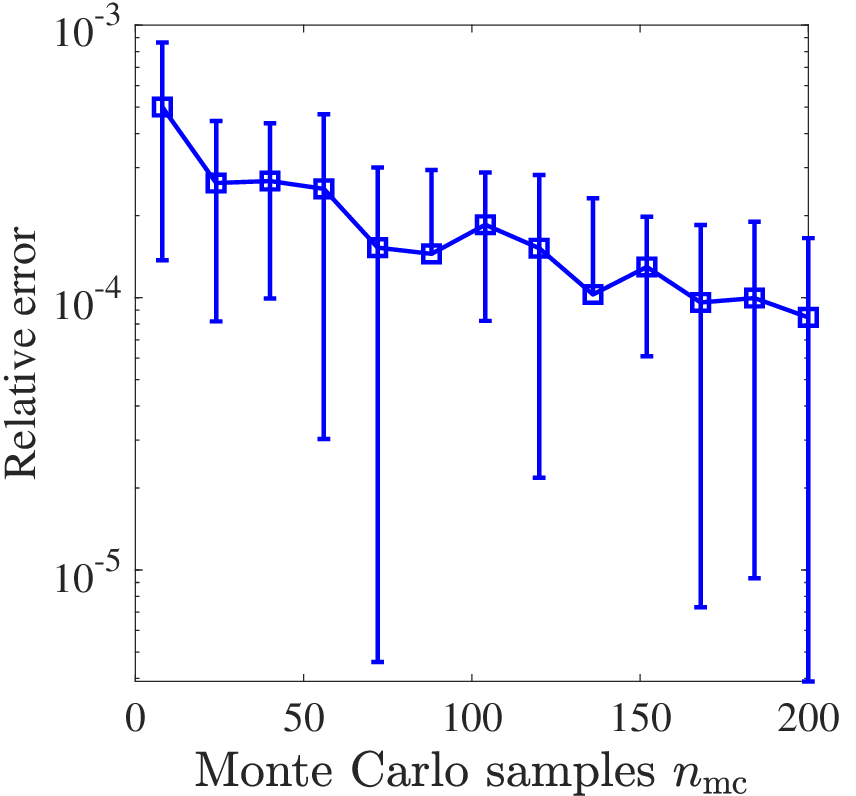}
    \includegraphics[scale=0.32]{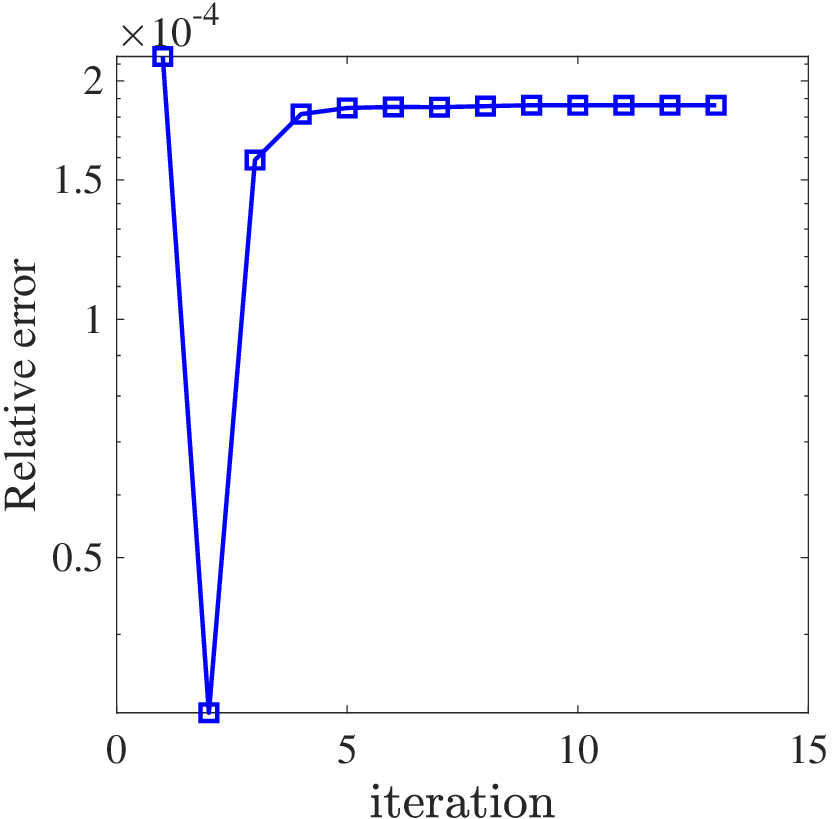}
    \caption{In the left panel are relative errors of the objective function provided in terms of the number of Monte Carlo samples.  The solid line corresponds to the average over 10 runs, and the bars represent $1$ standard deviation.  In the right panel, relative objective function errors at each iteration of the optimization scheme are provided.}
    \label{fig:mcaccuracy}
\end{figure}
 
 In the right panel of Figure \ref{fig:mcaccuracy}, we plot the history of the relative error across the optimization routine. The optimizer took 12 iterations and 81 function evaluations for convergence, and we observe that the error remains comparable throughout the iteration history.
 The image reconstructions with initial hyperparameters $\bftheta_0$ and optimized hyperparameters $\bftheta_{\rm prec} = \{ 9.870\times 10^{-5},
   1.5101,
  58.3143\}$ are given in Figure \ref{fig:reconstructions}. The ground truth image is provided in the left panel of Figure \ref{fig:reconstructions} for reference. Further numerical experiments below give insight into the performance of the preconditioner and the Monte Carlo estimators. 
   \begin{figure}[!ht]
    \centering
    \includegraphics[scale=0.4]{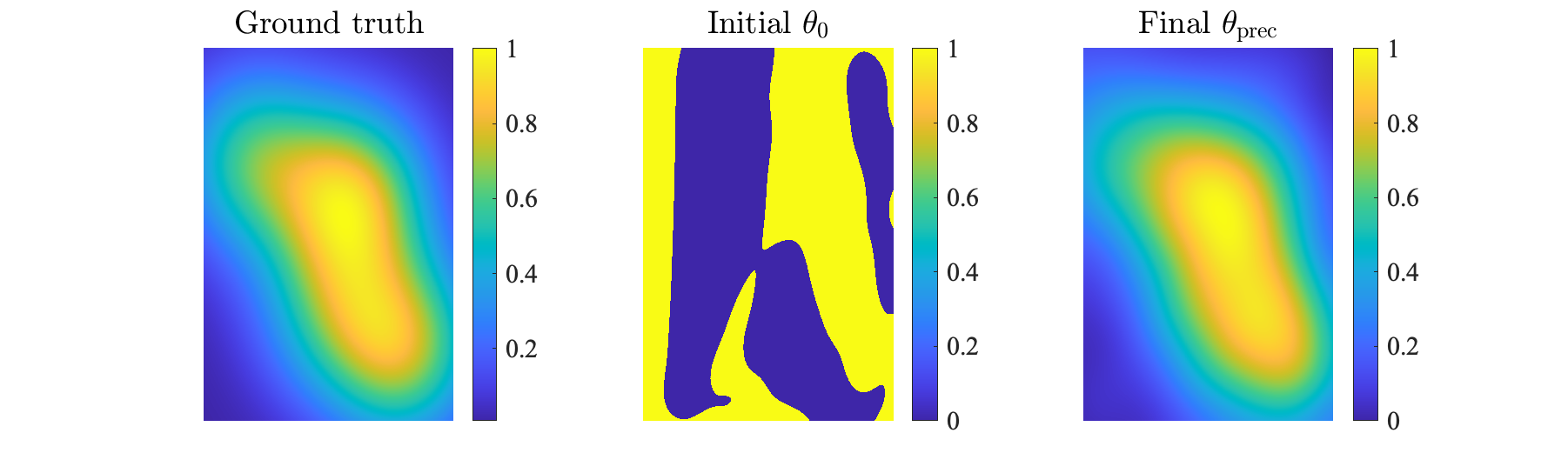}
    \caption{In the left panel is the ground truth image for the static seismic inverse example.  The middle and right panels contain image reconstructions corresponding to the initial hyperparameters $\bftheta_0$ and the optimized hyperparameters $\bftheta_{\rm prec}$ respectively.}
    \label{fig:reconstructions}
\end{figure}

\paragraph{Increasing preconditioner rank}
In this experiment, we take the number of Monte Carlo vectors $n_{\rm mc } = 24$. We choose the Mat\'ern parameter $\nu \in \{\frac12,\frac32,\frac52\}$ and study the effect of the preconditioner rank, which we vary from $r=5^2$ to $r=20^2$. The value of $\bftheta$ is taken to be $\bftheta_0$. We report in Table~\ref{tab:precondrank} the relative error in the objective function and the average number of Lanczos iterations required to compute the objective function. 
\begin{table}[!ht]
    \centering
\begin{tabular}{|c|c|c|c|c|}
\multicolumn{5}{c}{Accuracy of Monte Carlo estimator}\\ \hline
\diagbox[width=10em]{$\nu$}{$r$} & $5^2$ & $10^2$ & $15^2$ & $20^2$ \\ \hline
  $1/2$ & 9.8741e-04 & 1.5955e-03 & 1.6459e-04 & 2.1472e-04 \\ 
$3/2$ & 2.9869e-04 & 1.9522e-04 & 1.2817e-05 & 1.6093e-05 \\
$5/2$ & 1.2456e-04 & 1.2348e-05 & 1.1896e-07 & 1.9369e-07 \\
\hline 
  \multicolumn{5}{c}{Average number of Lanczos iterations}\\ \hline
    \diagbox[width=10em]{$\nu$}{$r$} & $5^2$ & $10^2$ & $15^2$ & $20^2$ \\ \hline
$1/2$ &  53.88 & 37.25 & 23.96 & 16.29 \\ 
$3/2$ & 22.79 & 10.50 & 6.04 & 4.42 \\
$5/2$ & 13.50 & 5.33 & 3.96 & 3.12 \\
     \hline
    \end{tabular}
    \caption{These results provide a comparison of the relative errors of the Monte Carlo estimator of the objective function, as well as the average number of Lanczos iterations, for different preconditioner ranks. 
    }
    \label{tab:precondrank}
\end{table}

We first comment on the accuracy. As the rank increases, the errors decrease (on average) for a fixed parameter $\nu$. For a fixed rank, the accuracy increases with increasing parameter $\nu$. Both observations can be explained by the fact that the preconditioner is becoming more effective with increasing rank and with increasing $\nu$. The effectiveness with higher rank is easy to understand, since a higher rank means a better approximation to the covariance matrix $\bfQ(\bftheta)$. To explain the effectiveness with higher $\nu$, observe that the eigenvalues of $\bfQ(\bftheta)$ decay more sharply with increasing $\nu$, since the kernel is smoother. Therefore, for the same rank, the preconditioner is more effective for a larger value of $\nu$. Next, we observe that the number of Lanczos iterations decrease with increasing rank and decrease with increasing $\nu$. Both of these observations reinforce the effectiveness of the preconditioner.

\paragraph{Different values of noise variance}
In this experiment, we take the number of Monte Carlo vectors $n_{\rm mc } = 24$ and fix the preconditioner rank to $r=20^2$. We choose the Mat\'ern parameter $\nu \in \{\frac12,\frac32,\frac52\}$ and study the effect of the preconditioner as the value of the noise variance estimate $\sigma_m^2$ ranges from $10^{-6}$ to $10^2$. The other two parameters $\sigma_n$ and $\ell$ are fixed as before.  The relative errors for the objective function as well as the average number of Lanczos iterations are reported in Table~\ref{tab:precondtheta1}. 
\begin{table}[!ht]
    \centering
\begin{tabular}{|c|c|c|c|c|c|}
\multicolumn{6}{c}{Accuracy of Monte Carlo estimator}\\ \hline
 \diagbox[width=10em]{$\nu$}{$\sigma_m^2$} & $10^2$ & $10^0$ & $10^{-2}$ & $10^{-4}$ & $10^{-6}$ \\ \hline
   $1/2$ &  9.4747e-08 & 1.5514e-03 & 2.2132e-04&  3.7249e-04 & 2.3660e-03 \\
$3/2$ &  3.3427e-10 & 8.7440e-06 & 2.8728e-06 & 7.2372e-05 & 2.3630e-05  \\
$5/2$ & 2.9519e-12 & 8.4275e-08 & 2.9167e-08 & 1.5431e-06  & 6.4102e-06 \\
   \hline 
  \multicolumn{6}{c}{Average number of Lanczos iterations}\\ \hline
    \diagbox[width=10em]{$\nu$}{$\sigma_m^2$}& $10^2$ & $10^0$ & $10^{-2}$ & $10^{-4}$ & $10^{-6}$\\ \hline
    $1/2$ & 3.08 & 4.12 & 8.17 & 45.17 & 247.54 \\
$3/2$ & 3.08 & 3.08&  3.62 & 6.54 & 38.62 \\
$5/2$ & 107.58 & 4.58 & 3.08 & 3.88 & 7.17 \\ \hline
    \end{tabular}
    \caption{These results provide a comparison of the accuracy of the Monte Carlo estimator for the objective function, as well as the average number of Lanczos iterations, for different values of $\sigma_m$ (the noise variance).
    }
    \label{tab:precondtheta1}
\end{table}

We observe that with decreasing estimates of $\sigma_m^2$, the accuracy decreases and the number of Lanczos iterations increase. This is because the preconditioner is less effective for smaller values of $\sigma_m$, since the preconditioner becomes closer to being singular. Furthermore, since $\sigma_m^2$ shifts all the eigenvalues of $\bfA\bfQ(\bftheta)\bfA\t$ away from zero, smaller values of $\sigma_m^2$ may lead to larger condition numbers and, therefore, require more Lanczos iterations. As before, we see that the accuracy increases and the number of Lanczos iterations decreases with increasing $\nu$, since the preconditioner is becoming more effective.

\paragraph{Timing} We now fix the number of Monte Carlo vectors $n_{\rm mc} = 24$ and the parameter $\nu = 3/2$ and we increase the number of measurements from $1,440$ to $12,960$. The rank of the preconditioner is taken to be $r=20^2=400$. For one function and gradient evaluation, at fixed $\bftheta_0,$ we provide in Table~\ref{tab:incmeas} the average number of Lanczos iterations required by the objective function computations, the wall-clock time (in seconds) taken by the Monte Carlo estimators and the time taken for the full evaluations (Full). 
\begin{table}[!ht]
    \centering
    \begin{tabular}{c|c|c|c}
        \# Measurements & Average number of iterations & MC [sec] & Full [sec]   \\ \hline
        $1,440$ & $11.25$ & $3.85$ & $34.82$  \\
        $5,760$ & $16.38$ & $4.05 $ & $157.05$\\
        $12,960$ & $22.25$ & $7.08$ & $-$
    \end{tabular}
    \caption{Comparison of number of Lanczos iterations (averaged over 24 Monte Carlo vectors) and timings in seconds for one function and one gradient evaluation, for different numbers of measurements. }
    \label{tab:incmeas}
\end{table}

We observe that the number of Lanczos iterations shows a mild growth with the number of measurements. The increase in measurements brings in additional information that the iterative solver has to resolve. Correspondingly, there is a mild growth in the run time for the Monte Carlo estimator. Nevertheless, in all scenarios, obtaining the Monte Carlo estimate is significantly faster than evaluating the full function and gradient, if it is even possible.

\subsection{Dynamic seismic inversion}\label{sec:numerics_dynamic}
We now extend the previous example to include a temporal component in the solution. In particular, the true solution is a spatial-temporal object consisting of two rotating Gaussians. We use the \verb|PRseismic| function in IR Tools~\cite{gazzola2019ir} to model the measurement process at each time point, where the number and location of the sources and the receivers do not change over time. We consider $128^2$ spatial locations at $50$ time points, so the number of unknown pixels is $n = 819, 200$.  We have $20$ receivers and sources resulting in $m = 20, 000$ measurements ($400$ per time point). The exact solution and the measurements at subsampled time points are provided in Figure~\ref{fig:dynamic_setting}.

\begin{figure}[!ht]
    \center
    \includegraphics[scale=0.8]{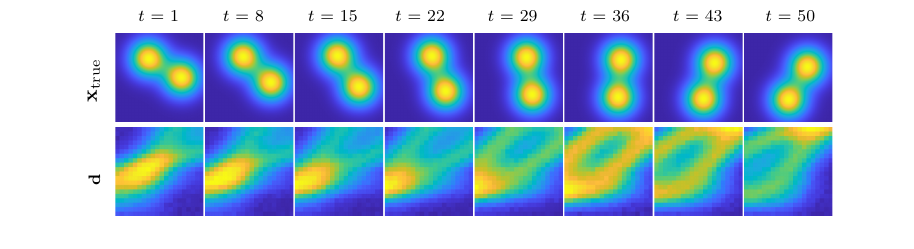}
    \caption{Top row: exact solutions at timepoints $t$ consisting of two rotating Gaussians. Bottom row: noisy measurements. The proportions of the images are accurate but, to aid visualization, the relative size between images and measurements is not.}
    \label{fig:dynamic_setting}
\end{figure}

Note that adding a temporal component to the solution  increases the size of the problem considerably, making exact function and gradient evaluations prohibitively costly. We assume that the forward operation at different time points is the same, resulting in a Kronecker product structure for the system matrix.  This allows for efficient matvecs with $\bfA$. In this example, the system matrix $\bfA$ is a Kronecker product of a matrix corresponding to a static seismic problem, $\bfA_s$, and the identity matrix: $\bfA = \bfI \otimes \bfA_s$. As in the previous example, we add $2\%$ Gaussian white noise so that $\bfR(\bftheta) = \sigma_m^2 \bfI$. The true value of $\sigma_m^2$ is $0.5915$.  Moreover, we assume that $\bfmu(\bftheta) = \bfzero$ and the covariance matrix also has Kronecker structure $ \bfQ(\bftheta) =  \bfQ_t(\bftheta) \otimes \bfQ_s(\bftheta) $ where $\bfQ_s(\bftheta)$ arises from a Mat\'ern covariance kernel with variance $\sigma_n^2$, $\nu=3/2$, and length scale $\ell_s$; similarly, $\bfQ_t(\bftheta)$ arises from a Mat\'ern covariance kernel with variance $1$, $\nu=5/2$, and length scale $\ell_t$. 
Note that we only consider a single parameter $\sigma_n^2$ controlling the  variance of the prior.  Therefore, the hyperparameters vector for this example is $\bftheta=(\sigma_m^2, \sigma_n, \ell_t, \ell_s)$ and $K=4$, where we assume the hyperprior defined in \eqref{eq:hyperprior}. The preconditioner is constructed using separate approximations of the temporal and spatial covariance matrices, with the ranks being $5$ and $10$ respectively, so that 
$$\bfP = \bfP_t \otimes \bfP_s = \bfU_t \bfM_t(\bftheta) \bfU_t\t  \otimes \bfU_s \bfM_s(\bftheta) \bfU_s\t =( \bfU_t \otimes \bfU_s) (\bfM_t(\bftheta) \otimes \bfM_s(\bftheta))(\bfU_t  \otimes \bfU_s)\t.$$ The approximations of the time and space covariance matrices are constructed independently as explained in Section \ref{ssec:precond}. The number of Monte Carlo vectors is taken to be $n_{\rm mc} = 18$.

The reconstructions at a subset of times and using different hyperparameters can be observed in Figure~\ref{fig:reconstructionsdyn}. In the top row, we provide the reconstructions using the initial hyperparameters, $\bftheta_{0}=(0.5915  \,1\,1\,1)$.  In the second row are the reconstructions using the optimal hyperparameters obtained using the proposed SAA optimization with function and gradient approximations coming from the Monte Carlo estimator without preconditioning, and finally, the last row contains the reconstructions with hyperparameters obtained with preconditioning. In the bottom of each figure, we provide the relative error norm of the reconstruction at that time point. One can clearly observe the improvement on the reconstructions when using the optimal hyperparameters computed using preconditioning. This example illustrates the importance of accurate Bayesian modeling to faithfully recover spatial features in noisy, underdetermined problems.  This example also illustrates the challenges with hyperparameter estimation and the need to have accurate enough function and gradient evaluations of the marginal posterior.

\begin{figure}[!ht]
    \centering
    \includegraphics[scale=0.8]{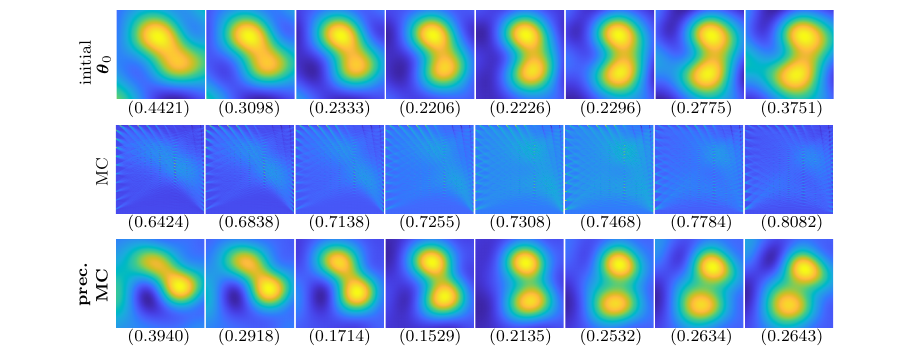}
    \caption{Reconstructions using the seismic inversion model corresponding to the initial hyperparameters $\bftheta_{0}=(0.59  \,1\,1\,1)$ (top row), the optimal parameters computed using approximate function and gradient evaluations based on the new SAA methods without preconditioning $\bftheta_{\bf mc}~=~(0.15  \,647.51\, 0.21\,0.00)$ (middle row), and the optimal parameters computed with preconditioning $\bftheta_{\bf prec}=(0.57  \,  0.19  \,  0.47  \,  0.32)$ (bottom row). }
    \label{fig:reconstructionsdyn}
\end{figure}

For additional insight into the Monte Carlo approaches, we provide in Table~\ref{table:dynamic} the number of iterations required in the optimization procedure, the total number of function evaluations, the average number of Lanczos iterations, and the number of times we reached the maximum number of iterations (i.e., when tolerances were not met). From the first two columns of Table~\ref{table:dynamic}, we observe that without preconditioning, fewer iterations and function evaluations were required.  
Note that the size of these problems made it prohibitive to compare the accuracy of the function (and gradient) evaluations with respect to the full evaluations. However, from the last two columns of Table~\ref{table:dynamic}, we can still say something about the convergence of the evaluations in terms of the preconditioning within the Lanczos iterations. First, note that the number of Lanczos iterations corresponds to the computation of the log determinant, and that the maximum number of iterations per evaluation was set to 350. 
We observe that the average number of Lanczos iterations is much lower when using preconditioning, and it is likely that the difference would have been greater if the maximum number of iterations was set higher. Moreover, the number of times that the Lanczos iterations reached the maximum amount allowed is much higher for the unpreconditioned case (36 out of 44 evaluations) compared to the preconditioned case (4 out of 87 evaluations, all occurring at the start of the optimization). In these cases, where the stopping criterion was not met, it is more likely that the function approximations are of much poorer quality. This is a possible explanation for the poor reconstructions in the unpreconditioned case.
\begin{table}[!ht]
    \begin{tabular}{l|l|l|l|l}
    \centering
    & iterations & evaluations & Lanczos iterations & reached max  \\
    \hline 
with preconditioning    & 18 & 87  & 122                         &   4   \\
without preconditioning & 8 & 44  & 303                      &     36        \end{tabular} 
\caption{In the first and second columns, we provide the number of iterations of the optimization procedure, as well as the total number of function evaluations, respectively.  In the third column, we provide the average number of Lanczos iterations involved in the computation of the log determinant; averaged both over the number of Monte Carlo samples and over all function evaluations. In the last column, we provide the number of instances where the function evaluations hit the maximum number of Lanczos iterations, which was set to 350 iterations.}
\label{table:dynamic}
\end{table}

\section{Conclusions/Discussion}
\label{sec:conclusions}
In this work, we develop an efficient and robust approach for hyperparameter estimation in hierarchical Bayesian inverse problems.  By exploiting a Monte Carlo approach, combined with an efficient preconditioned Lanczos method, the proposed SAA approach enables objective function and gradient estimations of the negative log marginal posterior.  We show how these approximations can be used within constrained optimization schemes for hyperparameter estimation. Future work includes investigating other types of trace estimators in this framework and combining alternative preconditioners, e.g., those that can exploit problem structure. Extensions to nonlinear forward models and non-Gaussian priors are also of interest.
\appendix

\section{Parametric kernel low-rank approximation for preconditioning}\label{app:paramlowrank}
We explain the construction of the parametric kernel low-rank approximation technique, the details of which are given in~\cite{fong2009black,khan2024parametric}. For completeness, we offer a brief description but refer the reader to the above resources for a more complete description. 

Our goal is to obtain an approximation of the form $\bfQ(\bftheta) \approx \bfU\bfM(\bftheta) \bfU\t$ given a set of points $\{\bfx_j\}_{j=1}^n$ and the kernel function $\kappa(\bfx,\bfy;\bftheta)$. Let the points $\{\bfx_j\}_{j=1}^n$ be enclosed in a box $\mc{B} = [\alpha_1, \beta_1] \times \dots \times [\alpha_d,\beta_d] \subset \bbR^d$. Similarly, let the parameters $\bftheta \in \mc{B}_\theta = [\alpha_1^p, \beta_1^p] \times \dots \times [\alpha_K^p,\beta_K^p] \subset \bbR_+^K$ be enclosed inside a box. For a nonnegative integer $j$, the $j$th Chebyshev polynomial of the first kind is $T_j(x) = \cos(j\arccos(x))$ for $-1 \le x \le 1$.   To transform to and from the interval $[\alpha,\beta]$, we use the invertible transform $\varphi_{[\alpha,\beta]}(x) = \frac{\beta-\alpha}{2}x  + \frac{\alpha+\beta}{2}$. We also define the interpolating polynomial 
\[\phi_{[\alpha,\beta]}(x,y) = \frac1p + \frac2p \sum_{j=1}^p T_j(\varphi_{[\alpha,\beta]}^{-1}(x))T_j(\varphi_{[\alpha,\beta]}^{-1}(y)), \]
and the Chebyshev interpolation nodes (for $1\le i \le p$)
\[ \eta_i^{(j)} = \left\{ \begin{array}{ll}
    \varphi_{[\alpha_j,\beta_j]}(\zeta_i)  &  1\le j \le d  \\
    \varphi_{[\alpha_{j-d},\beta_{j-d}]}(\zeta_i)  &  d+1\le j \le d +K \\
    \varphi_{[\alpha_{j-(d+d_\theta)},\beta_{j-(d+d_\theta)}]}(\zeta_i)  &  d+d_\theta+1\le j \le 2d+K. 
\end{array} \right. \]
The roots of $T_p$ define the Chebyshev nodes $\{\zeta_j\}_{j=1}^p$.
The factor matrices $\bfU$ can be expressed as $\bfU = \bfU_d \ltimes \dots \ltimes \bfU_1$, where
\[ [\bfU_j]_{it} = \phi_{[\alpha_j,\beta_j]}(\eta_t^{(j)},[\bfx_i]_j) \qquad 1 \le i \le n, 1 \le t \le p,  1\le j \le d, \]
and $\ltimes$ denotes the face splitting product. 

To define the matrix $\bfM(\bftheta)$, we define the vector of interpolating polynomials
\[ \bfq_i(\theta_i) = \bmat{\phi_{[\alpha_i^p,\beta_i^p]}(\eta_1^{(d+i)},\theta_i) & \dots & \phi_{[\alpha_i^p,\beta_i^p]}(\eta_n^{(d+i)},\theta_i)} \in \R^{1\times p}, \qquad 1 \le i \le K.\]
We also define the tensor $\ten{M}$, of size $p^D$ where $D = 2d + K$, and with entries 
\[ m_{i_1,\dots,i_D} = \kappa(\bfx, \bfy; \bftheta) \qquad 1 \le i_j \le p, 1 \le j \le D.\] 
Furthermore, $\bfx,\bftheta$ and $\bfy$ are defined via the relation $(\bfx, \bftheta, \bfy) = (\eta_{i_1}^{(1)}, \dots , \eta_{i_D}^{(D)})$. Next, we define the parametric tensor  $\ten{M}_F(\bftheta)$ via the tensor products 
\[ \ten{M}_F(\bftheta) = \ten{M} \times_{i=d+1}^{d+K} \bfq_{i-d}(\theta_{i-d}).\]
Finally, the matrix $\bfM(\bftheta)$ is obtained from a specific mode unfolding of the tensor as $\bfM_F^{(d)}(\bftheta)$. Details are given in~\cite{khan2024parametric}.

\section*{Funding}{This work was partially supported by the National Science Foundation program under grants DMS-2411197, DMS-2208294, DMS-2341843, DMS-2026830, DMS-2411198, and DMS-2026835. Any opinions, findings, conclusions or recommendations expressed in this material are those of the author(s) and do not necessarily reflect the views of the National Science Foundation. For the purpose of Open Access, the author has applied a CC BY public copyright licence to any Author Accepted Manuscript (AAM) version arising from this submission.}

\bibliography{refs}

\begin{thebibliography}{10}

\bibitem{ambikasaran2015fast}
S.~Ambikasaran, D.~Foreman-Mackey, L.~Greengard, D.~W. Hogg, and M.~O’Neil.
\newblock Fast direct methods for {G}aussian processes.
\newblock {\em IEEE transactions on pattern analysis and machine intelligence},
  38(2):252--265, 2015.

\bibitem{ambikasaran2013fast}
S.~Ambikasaran, A.~K. Saibaba, E.~F. Darve, and P.~K. Kitanidis.
\newblock Fast algorithms for {B}ayesian inversion.
\newblock In {\em Computational Challenges in the Geosciences}, pages 101--142.
  Springer, 2013.

\bibitem{anitescu2017inversion}
M.~Anitescu, J.~Chen, and M.~L. Stein.
\newblock An inversion-free estimating equations approach for {G}aussian
  process models.
\newblock {\em Journal of Computational and Graphical Statistics},
  26(1):98--107, 2017.

\bibitem{anitescu2012matrix}
M.~Anitescu, J.~Chen, and L.~Wang.
\newblock A matrix-free approach for solving the parametric {G}aussian process
  maximum likelihood problem.
\newblock {\em SIAM Journal on Scientific Computing}, 34(1):A240--A262, 2012.

\bibitem{bardsley2018computational}
J.~M. Bardsley.
\newblock {\em Computational uncertainty quantification for inverse problems},
  volume~19 of {\em Computational Science \& Engineering}.
\newblock Society for Industrial and Applied Mathematics (SIAM), Philadelphia,
  PA, 2018.

\bibitem{chow2014preconditioned}
E.~Chow and Y.~Saad.
\newblock Preconditioned {K}rylov subspace methods for sampling multivariate
  {G}aussian distributions.
\newblock {\em SIAM Journal on Scientific Computing}, 36(2):A588--A608, 2014.

\bibitem{cortinovis2022randomized}
A.~Cortinovis and D.~Kressner.
\newblock On randomized trace estimates for indefinite matrices with an
  application to determinants.
\newblock {\em Foundations of Computational Mathematics}, 22(3):875--903, 2022.

\bibitem{dong2017scalable}
K.~Dong, D.~Eriksson, H.~Nickisch, D.~Bindel, and A.~G. Wilson.
\newblock Scalable log determinants for {G}aussian process kernel learning.
\newblock {\em Advances in Neural Information Processing Systems}, 30, 2017.

\bibitem{fong2009black}
W.~Fong and E.~Darve.
\newblock The black-box fast multipole method.
\newblock {\em Journal of Computational Physics}, 228(23):8712--8725, 2009.

\bibitem{gardner2018gpytorch}
J.~Gardner, G.~Pleiss, K.~Q. Weinberger, D.~Bindel, and A.~G. Wilson.
\newblock Gpytorch: Blackbox matrix-matrix {G}aussian process inference with
  {GPU} acceleration.
\newblock {\em Advances in neural information processing systems}, 31, 2018.

\bibitem{gazzola2019ir}
S.~Gazzola, P.~C. Hansen, and J.~G. Nagy.
\newblock {IR Tools}: a {MATLAB} package of iterative regularization methods
  and large-scale test problems.
\newblock {\em Numerical Algorithms}, 81(3):773--811, 2019.

\bibitem{geoga2020scalable}
C.~J. Geoga, M.~Anitescu, and M.~L. Stein.
\newblock Scalable {G}aussian process computations using hierarchical matrices.
\newblock {\em Journal of Computational and Graphical Statistics},
  29(2):227--237, 2020.

\bibitem{hall2023efficient}
K.~A. Hall-Hooper, A.~K. Saibaba, J.~Chung, and S.~M. Miller.
\newblock Efficient iterative methods for hyperparameter estimation in
  large-scale linear inverse problems.
\newblock {\em arXiv preprint arXiv:2311.15827}, 2023.

\bibitem{han2017approximating}
I.~Han, D.~Malioutov, H.~Avron, and J.~Shin.
\newblock Approximating spectral sums of large-scale matrices using stochastic
  {C}hebyshev approximations.
\newblock {\em SIAM Journal on Scientific Computing}, 39(4):A1558--A1585, 2017.

\bibitem{khan2024parametric}
A.~Khan and A.~K. Saibaba.
\newblock Parametric kernel low-rank approximations using tensor train
  decomposition.
\newblock {\em arXiv preprint arXiv:2406.06344}, 2024.

\bibitem{konig2023efficient}
J.~K{\"o}nig, M.~Pfeffer, and M.~Stoll.
\newblock Efficient training of {G}aussian processes with tensor product
  structure.
\newblock {\em arXiv preprint arXiv:2312.15305}, 2023.

\bibitem{minden2017fast}
V.~Minden, A.~Damle, K.~L. Ho, and L.~Ying.
\newblock Fast spatial {G}aussian process maximum likelihood estimation via
  skeletonization factorizations.
\newblock {\em Multiscale Modeling \& Simulation}, 15(4):1584--1611, 2017.

\bibitem{shapiro2021lectures}
A.~Shapiro, D.~Dentcheva, and A.~Ruszczynski.
\newblock {\em Lectures on stochastic programming: modeling and theory}.
\newblock SIAM, 2021.

\bibitem{shustin2022gauss}
P.~F. Shustin and H.~Avron.
\newblock {G}auss-{L}egendre features for {G}aussian process regression.
\newblock {\em Journal of Machine Learning Research}, 23(92):1--47, 2022.

\bibitem{ubaru2017fast}
S.~Ubaru, J.~Chen, and Y.~Saad.
\newblock Fast estimation of tr(f({A})) via stochastic {L}anczos quadrature.
\newblock {\em SIAM Journal on Matrix Analysis and Applications},
  38(4):1075--1099, 2017.

\end{thebibliography}
\bibliographystyle{abbrv}
\end{document}